\documentclass[11pt,reqno,a4paper]{amsart}
\usepackage[margin=0.75in]{geometry}
\usepackage[usenames]{color}
\usepackage[usenames]{color}
\usepackage{amsmath,pdfsync,verbatim,graphicx,epstopdf,enumerate}
\usepackage[colorlinks=true]{hyperref}
\usepackage{cancel}
\usepackage[framemethod=tikz]{mdframed}
\hypersetup{allcolors=blue}
\allowdisplaybreaks
\usepackage{bm}

\newcommand{\wh}{\widehat}

\newcommand{\lb}{\left(}

\newcommand{\rb}{\right)}
\newcommand{\PD}{\partial}
\renewcommand{\d}{\delta}
\newcommand{\Beq}{\begin{equation}}
	\newcommand{\Eeq}{\end{equation}}
\newcommand{\beq}{\begin{equation*}}
	\newcommand{\eeq}{\end{equation*}}
\newcommand{\bal}{\begin{align}}
	\newcommand{\eal}{\end{align}}

\usepackage{mathtools}

\usepackage[notref,notcite]{}

\newcommand{\bp}{\begin{prob}}
	\newcommand{\ep}{\end{prob}}
\newcommand{\bpr}{\begin{proof}}
	\newcommand{\epr}{\end{proof}}

\newcommand{\bel}[1]{\begin{equation}\label{#1}}
	\newcommand{\ee}{\end{equation}}

\newtheorem{theorem}{Theorem}[section]
\newtheorem{lemma}[theorem]{Lemma}

\newtheorem{question}[theorem]{Question}

\newtheorem{corollary}[theorem]{Corollary}

\theoremstyle{definition}

\theoremstyle{remark}
\newtheorem{remark}[theorem]{Remark}

\numberwithin{equation}{section}

\newcommand{\Rn}{\mathbb{R}^n}
\newcommand{\R}{\mathbb{R}}
\newcommand{\D}{\mathrm{d}}
\newcommand{\Lc}{\mathcal{L}}

\newcommand{\Dc}{\mathcal{D}}
\newcommand{\Nc}{\mathcal{N}}
\newcommand{\A}{\alpha}

\def\tbl{\textcolor{blue}}

\usepackage[normalem]{ulem}

\newcommand{\Bb}{\mathbb{B}}

\usepackage[usenames]{color}
\usepackage{amsmath,pdfsync,verbatim,graphicx,epstopdf,enumerate}
\renewcommand{\d}{\delta}

\newcommand{\Rc}{\mathcal{R}}
\newcommand{\Sc}{\mathcal{S}}

\newcommand{\Nb}{\mathbb{N}}

\newcommand{\Sb}{\mathbb{S}}
\newcommand{\Sn}{\mathbb{S}^{n-1}}

\title{$d$-plane transform: unique and non-unique continuation}
\author{Divyansh Agrawal and Nisha Singhal}
\date{\today}
\subjclass[2020]{Primary 44A12; Secondary 45Q05,44A35}
\keywords{$d$-plane transform, Radon transform, normal operator, unique continuation, counterexample}

\address {Centre for Applicable Mathematics, Tata Institute of Fundamental Research, Bangalore, India
\newline
E-mail:{\tt\ agrdiv01@gmail.com, agrawald@tifrbng.res.in; nisha2020@tifrbng.res.in}
\newline
Orcid:{\tt\ 0009-0003-5125-0640, 0009-0006-3005-1986}
}

\begin{document}
\begin{abstract}
The $d$-plane transform maps functions to their integrals over $d$-planes in $\Rn$. We study the following question: if a function vanishes in a bounded open set, and its $d$-plane transform vanishes on all $d$-planes intersecting the same set, does the function vanish identically? For $d$ an even integer, we show  by producing an explicit counterexample, that neither the $d$-plane transform, nor its normal operator has this property. On the other hand, an even stronger property holds when $d$ is odd, where the normal operator vanishing to infinite order at a point, along with the function vanishing on an open set containing that point, is sufficient to conclude that the function vanishes identically.
\end{abstract}
\maketitle
\section{Introduction}
The \emph{$d$-plane transform} (also called the \emph{$k$-plane transform} or the \emph{generalized Radon transform}) maps a function to its integrals over given $d$-planes in $\Rn$, $0<d<n$. For the integral to make sense, some decay of the function at infinity is required. We restrict our attention to the space of compactly supported smooth functions in $\Rn$. For $d=1$, the integration takes place over straight lines in $\Rn$, and the transform is then called the X-ray transform, or simply the ray transform, due to its applications in radiology. For $d=n-1$, the integration takes place over hyperplanes in $\Rn$ and it is then the well-known Radon transform. In this article, we study the so-called unique continuation results for the $d$-plane transform, for all $d \in \{1, \dots, n-1\}$. The unique continuation property (henceforth abbreviated to UCP) for the $d$-plane transform asks the following question:

\begin{question}
    If a function and its $d$-plane transform both vanish on a non-empty bounded open subset of $\Rn$, does the function vanish identically?
\end{question}

\noindent Here the $d$-plane transform vanishing on an open set is to be interpreted as vanishing on all $d$-planes which intersect the open set. We study UCP for the $d$-plane transform, as well as its normal operator, and give complete answers in both the cases. We show that if $d$ is even, UCP does not hold for either the transform or its normal operator. We in fact produce an explicit counterexample. We also prove that when $d$ is odd, a stronger form of UCP holds (see Theorem~\ref{ucp-nd-inf}), which improves upon the previously known results (for instance, those of \cite{Covi_Railo_Mönkkönen}). The dichotomy between the behavior of $d$-plane transform when $d$ is even and odd can be attributed to the presence of powers of the Laplacian in its inversion formula. A similar phenomenon was also observed in the context of local equatorial characterization of zonoids (\cite{Nazarov})\footnote{We thank the reviewer for bringing this to our attention.}, where the authors showed that there is no local equatorial characterization in odd dimensions, while such was known to be true in even dimensions (\cite{Goodey-Weil1,Panina}). Interestingly, that was a consequence of the fact that the inversion formula for the cosine transform is non-local in odd dimensions.

The $d$-plane transform has a long history and has been studied by many authors in various settings \cite{Berenstein-inversion,Berenstein-range,Estrada-Rubin,Rubin-inv,Rubin-inj}. It is known to be injective when the transform is  known over all $d$-planes in $\Rn$ and explicit inversion formulas exist \cite{Helgason_Book}. The injectivity has also been studied in restricted settings \cite{Rub-restr}. A complete description of the range of the transform can also be found in \cite{Helgason_Book}. A study of the $d$-plane transform of functions with low regularity can be found in \cite{Rubin-wavelet-inv,Rubin-recon} and references therein. We refer the reader to \cite{Dr1,Rub-norm,Rubin-est} and the references therein for a study of the mapping properties of the transform between weighted $L^p$-spaces.

UCP in the context of integral geometry has gained considerable interest in the past few years. These results are possible due to their close connection with powers of the Laplacian, $(-\Delta)^r$, which is a non-local operator when $r$ is not an integer. In \cite{Keijo_partial_function, Keijo_partial_vector_field}, the authors study UCP for normal operator of the ray transform, acting on functions and vector fields, respectively. These results were generalized to normal operator of ray and moment ray transforms of tensor fields in \cite{AKS}. In \cite{ilmavirta2023unique}, the authors study the measurable UCP (the function $f$ vanishes in some open set $U$ and the normal operator is only assumed to vanish in a set $E \subset U$ of positive measure) for moment ray transform and study the UCP for fractional moment transforms. 

In \cite{Covi_Railo_Mönkkönen}, the authors study UCP for higher fractional powers of the Laplacian, and obtain as a corollary, the (weak)-UCP (see Subsection \ref{defnucp} for the definition of weak and strong UCP) for the normal operator of the $d$-plane transform, when $d$ is odd. In the same work, the authors raise the question as to whether UCP can hold when $d$ is even. The current article addresses both these situations. As mentioned before, the $d$-plane transform coincides with ray and Radon transforms for $d=1$ and $d=n-1$, respectively. Thus, all the previous results on UCP for ray and Radon transforms of functions follow from the results of the current article. 

Besides proving the stronger form of UCP than what was previously known for odd integers $d$, the main novelty of this work is the construction of explicit counterexamples to  UCP when $d$ is even. Let us mention some important counterexamples in the integral geometry literature for Radon and related transforms. After local injectivity of the weighted Radon transform with positive real-analytic weights was proved in \cite{Boman-Quinto-Duke}, it was believed that a similar result should also be true for positive smooth weights, since the former class is dense in the latter. However, Boman \cite{Boman-counterexample} gave a counterexample and showed that even global injectivity could fail when the weight is only a positive smooth function. In fact, the set of weights for which local injectivity fails was also shown to be dense in the space of positive smooth weights \cite{Boman-ce}. In \cite{Zalcman-ce}, the author discussed uniqueness of the Radon transform in the plane under various restrictions on the space of the lines and gave various examples of non-uniqueness. In \cite{agrawal2023simple}, the authors showed that the UCP does not hold for the spherical Radon transform in odd dimensions.

The rest of the article is organized as follows. In Section \ref{main_results}, we state our main results. These are divided into two subsections. Subsection \ref{lucp} contains our results which show that UCP can not hold when $d$ is even, and Subsection \ref{ucp} contains the results about UCP when $d$ is odd. We introduce our notation and give relevant definitions briefly in Section \ref{prelim}. The proofs of the results are collected in Section \ref{proof}. 

\section{Main Results} \label{main_results}
In this article, we always assume $n \geq 2$. For $0 < d < n$, a $d$-dimensional affine subspace of $\Rn$ is called a \emph{$d$-plane in $\Rn$}. Let $G(d,n)$ denote the set of all $d$-planes in $\Rn$. We denote by $G_{d,n}$ the Grassmann manifold of all $d$-dimensional subspaces ($d$-planes through the origin) of $\Rn$. The parallel translation of a $d$-plane to one through the origin gives a mapping $\pi$ of $G(d,n)$ onto $G_{d,n}$. The inverse image $\pi^{-1}(\sigma)$ of a member $\sigma \in G_{d,n}$ is then identified with the orthogonal complement $\sigma^\perp$. Thus, any $d$-plane in $\Rn$ can be uniquely parametrized as
\[
\xi = (\sigma, x'') = \sigma + x'', \quad \sigma = \pi(\xi)~~ \text{and} ~x'' = \sigma^\perp \cap \xi.
\]

Let $C_c^\infty(\Rn)$ denote the space of compactly supported smooth functions in $\Rn$. Let $\Rc_d$ denote the $d$-plane transform which maps a function to its integral over a given $d$-plane. In the case when $d=n-1$, this is the well-known Radon transform, which we will denote by $\Rc$. One can consider the formal adjoint of $\Rc_d$ and introduce a backprojection operator $\Rc_d^*$. The composition of these operators leads to the normal operator of the $d$-plane transform, denoted henceforth by $\Nc_d$, and simply by $\Nc$ in the case of Radon transform ($d=n-1$).
See Section \ref{prelim} for precise definitions and expressions for the operators and Subsection \ref{defnucp} for our definitions of weak and strong UCP in the context of the $d$-plane transform. We now state our main results in the next two subsections.

\subsection{Lack of UCP when the $d$-plane is even dimensional}\label{lucp}
Our first set of results shows that neither weak nor strong unique continuation can hold for the $d$-plane transform, when $d$ is an even natural number. 

\begin{theorem}\label{lucp-rd}
    Let $0 < d < n$ be an even integer. Let $U$ be a bounded open subset of $\Rn$. There exists a non-trivial function $f\in C_c^{\infty}(\Rn)$ such that $f|_U = 0$ and $\Rc_d f(\xi) = 0$ for all $d$-planes $\xi$ such that $\xi \cap U \neq \phi$.
\end{theorem}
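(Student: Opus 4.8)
The plan is to bypass the normal operator and the inversion formula entirely, and instead build an explicit \emph{radial} counterexample supported in a thin annulus lying just outside a ball that engulfs $U$. The first move is a reduction to a ball: since the $d$-plane transform is covariant under translations and dilations, and since requiring $\Rc_d f$ to vanish on a \emph{larger} family of planes only strengthens the conclusion, it suffices to produce, for the unit ball $B=B(0,1)$, a nontrivial $f\in C_c^\infty(\Rn)$ with $f\equiv 0$ on $B$ and $\Rc_d f(\xi)=0$ for every $d$-plane $\xi$ meeting $B$. Given an arbitrary bounded open $U$, one then translates and scales so that the transported ball contains $\overline{U}$. Because the $f$ I construct will be supported in the annulus $\{1<|x|<2\}$, the requirement $f|_U=0$ is automatic from the support, and planes meeting $U\subseteq B$ are among those meeting $B$.

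The next step exploits radial symmetry. For a radial $f(x)=f_0(|x|)$ the integral over a $d$-plane $\xi$ depends only on $s=\operatorname{dist}(0,\xi)$, and integrating $f_0\big(\sqrt{s^2+|u|^2}\big)$ over $u$ in the plane and substituting $t=\sqrt{s^2+|u|^2}$ gives, up to the constant $\omega_{d-1}=|\Sb^{d-1}|$,
\[
\Rc_d f(\xi)=F(s)=\omega_{d-1}\int_s^\infty f_0(t)\,(t^2-s^2)^{d/2-1}\,t\,dt .
\]
A $d$-plane meets $B$ exactly when $s<1$, so the problem reduces to choosing $f_0\in C_c^\infty((1,2))$, not identically zero, with $F(s)=0$ for all $s\in[0,1)$.

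Here the parity of $d$ enters decisively. With $m=d/2\in\Zb_{\ge 1}$, the exponent $d/2-1=m-1$ is a \emph{nonnegative integer}, so for $s<1$ (where $f_0(t)$ is supported on $t\ge 1>s$) one expands $(t^2-s^2)^{m-1}$ by the binomial theorem and finds that $F(s)$ is a polynomial in $s^2$ of degree $m-1$ whose coefficients are the moments $\int_1^2 f_0(t)\,t^{2k+1}\,dt$ for $k=0,\dots,m-1$. Hence $F\equiv 0$ on $[0,1)$ is \emph{equivalent} to the finite system $\int_1^2 f_0(t)\,t^{2k+1}\,dt=0$, $k=0,\dots,m-1$. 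These are simultaneously annihilated by the explicit choice $f_0=\psi^{(d)}$ for any nonzero $\psi\in C_c^\infty((1,2))$: integrating by parts $d$ times (with no boundary terms, as $\psi$ is compactly supported) turns each moment into $\pm\int \psi\,\tfrac{d^d}{dt^d}t^{2k+1}\,dt=0$, since $2k+1\le d-1<d$. The resulting $f_0$ is nontrivial, for a compactly supported function whose $d$-th derivative vanishes identically would be a polynomial, hence zero.

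The main obstacle, and the crux of the even/odd dichotomy, is precisely this last reduction: for even $d$ the kernel $(t^2-s^2)^{d/2-1}$ is polynomial, so the obstruction collapses to finitely many moments that are trivially satisfiable; for odd $d$ the same kernel carries a genuine half-integer power, $F$ becomes a fractional (Abel-type) integral of $f_0$ that is real-analytic in $s$ on $(-\infty,1)$, and $F\equiv 0$ on an interval forces $f_0\equiv 0$, blocking the construction. The remaining points are routine and I would only verify them briefly: the covariance/scaling reduction, the justification of the substitution in the radial transform formula, and the fact that $f(x)=f_0(|x|)$ is genuinely smooth on $\Rn$, which is immediate since its support avoids the origin.
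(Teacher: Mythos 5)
Your proposal is correct, and it takes a genuinely different route from the paper. The paper first constructs a counterexample for the Radon transform in the odd-dimensional space $\R^{d+1}$ by prescribing the data $g(\theta,s)=h(s)$ with $h$ supported in $(-a,-1)\cup(1,a)$, invoking the range characterization and support theorem to obtain a radial preimage $f_d$, and then using the radial inversion formula --- where the kernel $(s^2-|x|^2)^{(n-3)/2}$ is a polynomial in $s$ of degree at most $n-3$, annihilated by $h^{(n-1)}$ --- to see that $f_d$ vanishes on the unit ball; finally it extends $f_d$ radially to $\Rn$ and uses an orthogonal change of variables to reduce any $d$-plane meeting the unit ball to one lying in $\R^{d+1}$. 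You instead work directly with the forward operator in $\Rn$: the $d$-plane transform of a radial function is the Abel-type integral $|\Sb^{d-1}|\int_s^\infty f_0(t)(t^2-s^2)^{d/2-1}t\,dt$, whose kernel is a polynomial in $s^2$ exactly when $d$ is even, so the vanishing condition collapses to the $d/2$ moment conditions $\int f_0(t)t^{2k+1}\,dt=0$, $0\le k\le d/2-1$, which are killed by $f_0=\psi^{(d)}$. The underlying mechanism is the same in both arguments (a $N$-th derivative of a bump function is orthogonal to all polynomials of degree less than $N$), but your version is more self-contained and arguably more transparent: it needs neither the range nor the support theorem, nor the inversion formula, nor the dimensional-reduction step, and it exhibits the even/odd dichotomy directly at the level of the forward transform rather than through the inversion formula. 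The price is that you must verify the radial formula for $\Rc_d$ yourself (an elementary polar-coordinates computation), whereas the paper leans on standard Radon transform theory; also note that $F(s)$ is a polynomial in $s^2$ of degree \emph{at most} $d/2-1$, which is all that is needed.
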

\noindent Our proof of the theorem is constructive, i.e., we explicitly produce the function that serves as a counterexample. This also answers a question raised in \cite{Covi_Railo_Mönkkönen} (see discussion after corollary 2 there), as to whether UCP holds when $d$ is even. As a corollary, we obtain that UCP does not hold for the normal operator of the $d$-plane transform. 
\begin{corollary}\label{lucp-nd}
    Let $0 < d < n$ be even and $U \subset \Rn$ be open and bounded. There exists a non-trivial function $f \in C_c^\infty(\Rn)$ such that $f\lvert_U = 0$ and  $\Nc_d f|_U = 0$. 
\end{corollary}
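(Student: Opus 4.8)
The plan is to deduce this directly from Theorem~\ref{lucp-rd} by exploiting the structure of the normal operator as a composition $\Nc_d = \Rc_d^* \Rc_d$. I take $f \in C_c^\infty(\Rn)$ to be exactly the non-trivial counterexample furnished by Theorem~\ref{lucp-rd}, so that $f|_U = 0$ and $\Rc_d f(\xi) = 0$ for every $d$-plane $\xi$ with $\xi \cap U \neq \emptyset$. Since $f$ is already non-trivial and vanishes on $U$, it then suffices to verify the single additional fact that $\Nc_d f$ vanishes on $U$ as well; the same $f$ will thus serve simultaneously as a counterexample to UCP for both the transform and its normal operator.

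The key observation is that the backprojection operator $\Rc_d^*$ is local in the following sense: the value $\Nc_d f(x) = (\Rc_d^* \Rc_d f)(x)$ is obtained by integrating $\Rc_d f(\xi)$ over the set of $d$-planes $\xi$ passing through the point $x$, with respect to the natural measure on the Grassmann manifold $G_{d,n}$ (see the explicit expression for $\Rc_d^*$ in Section~\ref{prelim}). Concretely, in the parametrization $\xi = \sigma + x''$, the backprojection at $x$ averages $\Rc_d f$ over $\{\sigma + x : \sigma \in G_{d,n}\}$. Hence $\Nc_d f(x)$ depends only on the values of $\Rc_d f$ on those $d$-planes that contain $x$.

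Now fix any $x \in U$. Every $d$-plane $\xi$ passing through $x$ automatically satisfies $\xi \cap U \neq \emptyset$, since $x \in \xi \cap U$. By the defining property of $f$ from Theorem~\ref{lucp-rd}, we have $\Rc_d f(\xi) = 0$ for all such $\xi$. Consequently the integrand appearing in the backprojection formula for $\Nc_d f(x)$ vanishes identically, giving $\Nc_d f(x) = 0$ for every $x \in U$, i.e.\ $\Nc_d f|_U = 0$. Together with the non-triviality of $f$ and $f|_U = 0$, this establishes the corollary.

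I do not anticipate a genuine obstacle here: the entire substantive content is already packaged into Theorem~\ref{lucp-rd}, and the argument reduces to the elementary inclusion that any plane through a point of $U$ meets $U$. The only point requiring care is to record precisely, from the explicit formula for $\Rc_d^*$ in Section~\ref{prelim}, that the backprojection measure at $x$ is supported exactly on the family of $d$-planes through $x$, so that the locality property invoked above is rigorously justified rather than merely heuristic.
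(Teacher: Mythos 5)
Your argument is correct, and in fact the paper records it in a single sentence immediately before its proof of Corollary~\ref{lucp-nd}: since the backprojection $\Rc_d^*\varphi(x)=\int_{x\in\xi}\varphi(\xi)\,\D\mu(\xi)$ only sees $d$-planes through $x$, and every such plane meets $U$ when $x\in U$, the corollary is an immediate consequence of Theorem~\ref{lucp-rd}. However, the proof the paper actually writes out is different and does not pass through Theorem~\ref{lucp-rd} at all: one takes any non-trivial $u\in C_c^\infty(\Rn)$ vanishing on the unit ball, sets $f=(-\Delta)^{d/2}u$ (a genuine differential operator precisely because $d$ is even), and checks on the Fourier side, using $\widehat{\Nc_d f}(\xi)=c\,|\xi|^{-d}\widehat f(\xi)$ from the Riesz-potential form of the normal operator, that $\Nc_d f=u$ up to a constant; non-triviality of $f$ follows because a compactly supported non-zero function cannot be polyharmonic. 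The comparison: your route is essentially free once Theorem~\ref{lucp-rd} is in hand, and it produces a single function falsifying UCP for both $\Rc_d$ and $\Nc_d$ simultaneously; the paper's direct construction is self-contained (independent of the Radon-transform counterexample and the support theorem), and it isolates the conceptual source of the even/odd dichotomy, namely the locality of integer powers of the Laplacian. The one point you flagged as needing care --- that the measure $\mu$ in \eqref{adjoint} is supported exactly on the planes through $x$ --- is indeed exactly what the definition of $\Rc_d^*$ provides, so there is no gap.
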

\noindent This can also be proven directly using the fact that the normal operator is a convolution with the Riesz potential (see Subsection~\ref{p-lucp}).

Recall that a function $g \in C^\infty(\Rn)$ is said to \emph{vanish to infinite order at a point $x_0 \in \Rn$} if $\lb \PD^\A g \rb(x_0) = 0$ for all multi-indices $\A$. Note that if a smooth function vanishes in an open set, then it vanishes to infinite order at each point of the open set. Thus we also have the existence of a non-trivial function $f \in C_c^\infty(\Rn)$ such that $f$ vanishes in an open set and $\Nc_d f$ vanishes to infinite order at a point in the set.
\noindent Let us mention the case of Radon transform separately. The reason for doing so is that we will first construct a function as a counterexample for the Radon transform, and then use that to construct the function for the $d$-plane transform.
\begin{theorem}\label{lucp-rad}
    Let $n \geq 2$ be an odd integer. Let $U$ be a bounded open subset of $\Rn$. There exists a non-trivial function $f \in C_c^\infty(\Rn)$ such that $f|_U = 0$ and $\Rc f (\xi) = 0$ for all hyperplanes $\xi$ intersecting $U$.
\end{theorem}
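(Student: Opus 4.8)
The plan is to reduce to the case of a ball and then exploit a radial ansatz together with the \emph{locality} of the inversion in odd dimensions, which shows up concretely as a \emph{polynomial} integration kernel. Since $U$ is bounded, I first choose $x_0 \in \Rn$ and $R>0$ with $\overline{U} \subset B(x_0,R)$; any hyperplane $\xi$ meeting $U$ then meets $B(x_0,R)$ and so lies at distance $<R$ from $x_0$. After translating, we may take $x_0=0$, and it suffices to build a nontrivial $f \in C_c^\infty(\Rn)$ with $f \equiv 0$ on $B(0,R)$ and $\Rc f(\xi)=0$ for every hyperplane $\xi$ at distance $<R$ from the origin. I would use the radial ansatz $f(x)=\phi(|x|)$ with $\phi \in C_c^\infty((R,\infty))$, which automatically makes $f$ vanish on $B(0,R)$ and keeps it smooth (its support is bounded away from the origin). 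For such $f$ the Radon transform depends only on the signed distance $s$ of the hyperplane from the origin, and parametrizing $\{x\cdot\omega=s\}$ by $x=s\omega+y$ with $y\perp\omega$, passing to polar coordinates, and substituting $r=\sqrt{s^2+|y|^2}$ gives
\[
\Rc f(\omega,s) \;=\; \omega_{n-2}\int_{|s|}^{\infty} \phi(r)\,(r^2-s^2)^{\frac{n-3}{2}}\, r\, dr, \qquad \omega_{n-2}=|\mathbb{S}^{n-2}|.
\]

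The crux, and exactly where oddness enters, is the structure of the kernel. For $n$ odd the exponent $m:=(n-3)/2$ is a nonnegative integer, so $(r^2-s^2)^m$ is a polynomial in $s^2$. When $|s|<R$ the factor $\phi(r)$ annihilates the range $r\in[|s|,R]$, whence on the interval $(-R,R)$,
\[
(r^2-s^2)^m = \sum_{k=0}^m \binom{m}{k}(-1)^k r^{2(m-k)} s^{2k}, \qquad \Rc f(\omega,s) = \omega_{n-2}\sum_{k=0}^m \binom{m}{k}(-1)^k s^{2k}\int_{R}^{\infty}\phi(r)\, r^{2(m-k)+1}\, dr,
\]
a polynomial in $s$. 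Since a polynomial vanishing on the nonempty interval $(-R,R)$ vanishes identically, the requirement $\Rc f(\omega,s)=0$ for all $|s|<R$ is equivalent to the $(n-1)/2$ moment conditions
\[
\int_{R}^{\infty}\phi(r)\, r^{2j+1}\, dr = 0, \qquad j=0,1,\dots,\tfrac{n-3}{2}.
\]

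Finally I would produce a nonzero $\phi$ meeting these finitely many linear conditions. The map $\phi \mapsto \big(\int_R^\infty \phi(r)\,r^{2j+1}\,dr\big)_{j=0}^{(n-3)/2}$ is linear from the infinite-dimensional space $C_c^\infty((R,\infty))$ into $\R^{(n-1)/2}$, so its kernel has finite codimension and is nontrivial; concretely one may take a combination of bump functions with disjoint supports in $(R,\infty)$ and solve the resulting homogeneous linear system. Setting $f(x)=\phi(|x|)$ then completes the construction. The main obstacle is precisely the collapse of the vanishing requirement over the open family of hyperplanes into a finite set of moment conditions: this is the step that fails when $d=n-1$ is odd (i.e. $n$ even), for then $m=(n-3)/2$ is a half-integer, the kernel $(r^2-s^2)^m$ is no longer polynomial, and the vanishing condition becomes a genuine Abel-type integral equation forcing $\phi\equiv 0$ — in agreement with the even/odd dichotomy driving the paper.
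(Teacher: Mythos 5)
Your argument is correct, and while it exploits the same underlying mechanism as the paper's proof --- namely that the kernel $(r^2-s^2)^{(n-3)/2}$ linking a radial function to its Radon transform is a polynomial in $s$ when $n$ is odd --- you run the construction in the opposite direction. The paper prescribes the Radon data $g(\theta,s)=h(s)$ with $h$ even and supported in $(-a,-1)\cup(1,a)$, invokes the range characterization and the support theorem to obtain a compactly supported preimage $f$ (necessarily radial), and then uses the inversion formula for radial functions together with integration by parts to check that $f$ vanishes on the unit ball; nontriviality is immediate there since $\Rc f=g\neq 0$. You instead prescribe $f=\phi(|x|)$ with $\phi\in C_c^\infty((R,\infty))$, compute $\Rc f$ by the elementary forward formula, and observe that vanishing on all hyperplanes at distance less than $R$ from the origin collapses to the $(n-1)/2$ moment conditions $\int_R^\infty\phi(r)r^{2j+1}\,dr=0$, $0\le j\le\tfrac{n-3}{2}$, which a nonzero $\phi$ can satisfy since these are finitely many linear constraints on an infinite-dimensional space. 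Your route is more self-contained (no range theorem, no support theorem, no inversion formula), and it makes the counterexample fully explicit as a solution of a small linear system; what the paper's direction buys is that the same polynomial-kernel phenomenon is exhibited on the inversion side, which is the locality theme the authors emphasize throughout. Both constructions yield radial counterexamples and either feeds equally well into the reduction to general even $d$ in Theorem~\ref{lucp-rd}.
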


\begin{remark}
    Note that all the theorems stated in this subsection require $U$ to be bounded. The following is an interesting example suggested by the reviewer for when it is not bounded. Let $U$ be an open set such that any $d$-plane intersects $U$ (for instance, $U$ can be taken to be an open neighborhood of the coordinate hyperplanes). Then $\Rc_d f \equiv 0$ trivially yields $f \equiv 0$.
\end{remark}

\subsection{UCP when the $d$-plane is odd dimensional}\label{ucp}
Our next set of results shows that when $d$ is odd, the strong UCP holds for the normal operator $\Nc_d$, which in turn implies the weak UCP and the UCP  for $\Rc_d$.

\begin{theorem}\label{ucp-nd-inf}
    Let $0 < d < n$ be an odd integer. Let $U$ be an open subset of $\Rn$ and $x_0 \in U$. Let $f \in C_c^\infty(\Rn)$ be such that $f|_U = 0$ and $\Nc_d f$ vanishes to infinite order at $x_0$. Then $f \equiv 0$. 
\end{theorem}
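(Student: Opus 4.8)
The plan is to reduce the statement about $\Nc_d$ to a unique continuation statement for a fractional power of the Laplacian, and then invoke the now-standard UCP for $(-\Delta)^s$ with non-integer $s$. The starting point is the convolution formula for the normal operator: since $\Nc_d f$ is a constant multiple of the Riesz potential $I_{n-d} * f$ (equivalently, $\Nc_d f = c_{d,n}\,(-\Delta)^{-d/2} f$ up to normalization, reading off the Fourier symbol $|\xi|^{-d}$), we see that $\Nc_d$ is, modulo a constant, the inverse of $(-\Delta)^{d/2}$. When $d$ is odd, $d/2$ is a half-integer and hence $(-\Delta)^{d/2}$ is a genuinely \emph{nonlocal} operator, which is exactly the feature that powers unique continuation. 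First I would make this identification precise on the Fourier side, taking care that $f \in C_c^\infty(\Rn)$ so all the transforms and convolutions are well-defined and $\Nc_d f$ is smooth.

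Next I would apply $(-\Delta)^{d/2}$ to both sides to write $f = c\,(-\Delta)^{d/2}(\Nc_d f)$, or equivalently treat $u := \Nc_d f$ as the unknown satisfying $(-\Delta)^{d/2} u = c\, f$. Since $d$ is odd, set $s = d/2 \in (0,\infty) \setminus \zz$; the key input is the strong UCP for the fractional Laplacian: if $u$ and $(-\Delta)^{s} u$ both vanish to infinite order at a single point $x_0$, then $u \equiv 0$. By hypothesis $\Nc_d f = u$ vanishes to infinite order at $x_0$. For the other condition, note $(-\Delta)^{s} u = c\, f$ vanishes on all of $U$ (since $f|_U = 0$), and $x_0 \in U$, so in particular $(-\Delta)^s u$ vanishes to infinite order at $x_0$ as well. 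The UCP result for higher fractional powers of the Laplacian — precisely the type of statement established in \cite{Covi_Railo_Mönkkönen} — then forces $u \equiv 0$, i.e.\ $\Nc_d f \equiv 0$. Injectivity of the normal operator (equivalently of $\Rc_d$) on $C_c^\infty(\Rn)$ then gives $f \equiv 0$.

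The main obstacle, and the place requiring the most care, is matching the precise hypotheses of the available fractional-Laplacian UCP theorem to the objects at hand. The subtlety is that the UCP for $(-\Delta)^s$ typically demands that both $u$ and $(-\Delta)^s u$ vanish to infinite order at the \emph{same} point, and one must confirm that the version in the literature covers arbitrary non-integer $s > 0$ (not merely $s \in (0,1)$), since for $d \geq 3$ odd we have $s = d/2 > 1$. This is exactly why the paper advertises an improvement over \cite{Covi_Railo_Mönkkönen}: the hypothesis here is only vanishing to infinite order at one point rather than vanishing on an open set, so the cited result must be invoked (or re-derived) in its strongest point-wise form. A secondary technical point is ensuring that $\Nc_d f$, built from the Riesz potential of a compactly supported function, lies in whatever function space (e.g.\ a suitable Sobolev or weighted space) the UCP theorem requires, and that the identity $(-\Delta)^{d/2}(\Nc_d f) = c\, f$ holds rigorously and not just formally on the Fourier side.

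Finally, I would record the logical chain asserted in the statement: strong UCP for $\Nc_d$ has been established, and since $f|_U = 0$ with $\Nc_d f$ vanishing to infinite order at $x_0 \in U$ already encodes the weak-UCP and plain-UCP hypotheses as special cases (vanishing on an open set implies infinite-order vanishing at each of its points, as the excerpt notes), the single conclusion $f \equiv 0$ simultaneously yields all the weaker continuation statements for both $\Nc_d$ and $\Rc_d$.
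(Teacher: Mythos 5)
Your reduction to a unique continuation statement for $(-\Delta)^{d/2}$ is a reasonable strategy, but as written it has a genuine gap at its central step: the theorem you need from the literature is not the one that is available. The UCP established in \cite{Covi_Railo_Mönkkönen} for higher-order fractional Laplacians is the \emph{weak} form: it requires both $u$ and $(-\Delta)^s u$ to vanish on a common nonempty \emph{open set}. What your argument needs is the \emph{strong} (single-point) form --- $u$ vanishing to infinite order at $x_0$ together with $(-\Delta)^s u$ vanishing near $x_0$ --- and that is precisely the strengthening this theorem is claiming to provide. Invoking it as a black box makes the argument circular (or at best incomplete), and your own ``main obstacle'' paragraph concedes that you have neither located nor re-derived the pointwise version for half-integer $s=d/2$, which can exceed $1$ when $d\ge 3$. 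There is also a secondary but real function-space obstruction in your choice of unknown: you set $u=\Nc_d f$, whose Fourier transform is $c\,|\xi|^{-d}\widehat f(\xi)$; since $\widehat f(0)$ need not vanish, $u$ fails to lie in $H^r(\Rn)$ for any $r$ whenever $2d\ge n$, so even the weak UCP of \cite{Covi_Railo_Mönkkönen} cannot be applied directly to $u$ in that regime (their own reduction avoids this by taking $u=f$ and $s=-d/2$, but then the single-point hypothesis sits on $(-\Delta)^s u$ rather than on $u$, which is again not the form of any strong UCP you cite).

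The paper's proof is entirely different and self-contained: it exploits the algebraic fact (the Lemma preceding Corollary~\ref{poly}) that for odd $d$ every function $p\lb \tfrac{x}{|x|^2}\rb |x|^{-(n-d)}$, $p$ a polynomial, is a finite linear combination of derivatives of $|x|^{-(n-d)}$. Applying the corresponding constant-coefficient differential operator $\Dc$ to the convolution formula \eqref{normal}, evaluating at the origin (legitimate because $f$ vanishes on $\Bb$, so the kernel is smooth where $f$ lives), and using the infinite-order vanishing of $\Nc_d f$ at $0$ yields $\int \tilde f\, p =0$ for all polynomials $p$ after a Kelvin transform; Weierstrass density then gives $f\equiv 0$. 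If you want to salvage your route, you would need to either prove the single-point strong UCP for $(-\Delta)^{d/2}$ on the relevant class of tempered distributions (essentially reproving the hard part), or redo the argument with $u=f$, $s=-d/2$ and establish a strong UCP with the infinite-order vanishing placed on $(-\Delta)^s u$; neither is supplied by the reference you lean on.
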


\begin{remark}
    The above result can also be proved for the normal operator $\Nc_d$ acting on compactly supported distributions following the details in \cite{Keijo_partial_function}, and invoking Corollary~\ref{poly} which removes the restriction on $d$ in \cite[Lemma~2.2]{Keijo_partial_function}.
\end{remark}

If $\Nc_d f \lvert_U = 0$, then trivially $\Nc_d f$ vanishes to infinite order at each point of $U$. Thus, the strong UCP for the normal operator (Theorem~\ref{ucp-nd-inf}) implies the weak UCP: if $f$ and $\Nc_d f$ both vanish in an open set $U$, then $f$ vanishes identically. The weak UCP for the normal operator was obtained in \cite{Covi_Railo_Mönkkönen} as a consequence of UCP for fractional Laplacian.

Similarly, if $\Rc_d f$ vanishes on all $d$-planes passing through an open set $U$, then $\Nc_d f$ vanishes in $U$ (see \eqref{adjoint} and \eqref{normal}). Thus, we obtain the UCP for the $d$-plane transform: if $f$ and $\Rc_d f$ vanish on an open set $U$, then $f$ vanishes identically.

\noindent The case of Radon transform is of particular interest, which we state separately. In this case, we also present a different proof, using the expansion of the function into a sum of spherical harmonics.
\begin{theorem}\label{ucp-rad}
    Let $n \geq 2$ be an even integer and $f \in C_c^{\infty}(\Rn)$. Let $U$ be an open subset of $\Rn$ such that $f|_U = 0$ and $\Rc f (\xi) = 0$ for all hyperplanes $\xi$ intersecting $U$. Then $f \equiv 0$.
\end{theorem}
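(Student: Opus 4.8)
The plan is to exploit the interaction of the Radon transform with the spherical-harmonic decomposition, reducing the problem to a family of one-dimensional uniqueness statements in which the parity of $n$ enters decisively. Since the hypotheses are covariant under translations (translating $f$ translates the family of hyperplanes meeting $U$ accordingly), I may assume $0 \in U$ and fix $\ve > 0$ with $B(0,\ve) \subset U$. I then expand $f$ in spherical harmonics, $f(r\omega) = \sum_{k,l} f_{kl}(r)\, Y_{kl}(\omega)$ with $r \ge 0$, $\omega \in \Sn$, where each radial profile $f_{kl}$ is smooth and supported in some $[0,R]$. Because $f|_{B(0,\ve)} = 0$, orthogonality of the $Y_{kl}$ on $\Sn$ gives $f_{kl}(r) = 0$ for $r < \ve$, so in fact each $f_{kl}$ is supported in $[\ve,R]$.

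Next I invoke the classical Ludwig--Helgason formula, which says that $\Rc$ respects this decomposition: writing a hyperplane as $\{x\cdot\theta = p\}$, one has $\Rc f(\theta,p) = \sum_{k,l} F_{kl}(p)\, Y_{kl}(\theta)$, where
\[
F_{kl}(p) = c_{n}\int_{|p|}^{\infty} f_{kl}(r)\,(r^2-p^2)^{(n-3)/2}\, C_k^{(n-2)/2}\!\left(\tfrac{p}{r}\right) r\, dr ,
\]
with $C_k^{\nu}$ the Gegenbauer polynomial of degree $k$. The hypothesis that $\Rc f(\theta,p)=0$ for every hyperplane meeting $U$ means $F_{kl}(p)=0$ for all $|p|<\ve$ and all $(k,l)$, again by orthogonality on $\Sn$. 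Everything thus reduces to the one-dimensional claim: if $g \in C_c^\infty([\ve,R])$ and the integral above (with $f_{kl}$ replaced by $g$) vanishes for $|p|<\ve$, then $g \equiv 0$.

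The heart of the matter is that, because each $f_{kl}$ is supported in $r \ge \ve > |p|$, the kernel is real-analytic in $p$ on $(-\ve,\ve)$ and one may differentiate under the integral sign; hence $F_{kl}$ is real-analytic there, and its vanishing forces all of its $p$-derivatives at $0$ to vanish. Expanding the kernel as a power series in $p$ and collecting the coefficient of $p^M$ produces, for each admissible $M \equiv k \pmod 2$, a constant multiple of a single moment $\int_{\ve}^{R} f_{kl}(r)\,\rho(r)\, r^{-M}\,dr$ for a fixed smooth positive weight $\rho$. Here the parity of $n$ is decisive: when $n$ is \emph{even}, $(n-3)/2$ is a genuine half-integer, so the binomial expansion of $(r^2-p^2)^{(n-3)/2}$ never terminates and, multiplied by the degree-$k$ Gegenbauer polynomial, yields a nonzero coefficient of $p^M$ for infinitely many $M$ tending to infinity (the relevant coefficient is asymptotically $(-1)^{s}\binom{(n-3)/2}{s}\, C_k^{(n-2)/2}(1)$, nonzero precisely because a half-integer binomial coefficient never vanishes). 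Consequently I obtain $\int_{\ve}^{R} \big(f_{kl}\rho\big)(r)\, r^{-2s}\,dr = 0$ for all large $s$; after the substitution $t = r^{-2}$ this says a fixed $C^\infty$ function on a compact subinterval of $(0,\infty)$ has all its moments vanishing, so it is identically zero. Hence $f_{kl}\equiv 0$ for every $(k,l)$, and $f\equiv 0$.

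I expect the main obstacle to lie in the bookkeeping of the third step: verifying that the coefficient of $p^M$ is a \emph{nonzero} multiple of a single moment for all large $M$, so that genuinely infinitely many moment conditions are extracted. This is exactly where the dichotomy with the odd case becomes transparent: for $n$ odd the kernel is a \emph{polynomial} in $p$, producing only finitely many moment conditions per harmonic degree $k$, which is insufficient to determine $f_{kl}$ and is consistent with the counterexample of Theorem~\ref{lucp-rad}. A secondary, purely technical point is to record the precise normalizing constant and convergence in the Ludwig--Helgason expansion and to justify differentiation under the integral sign, both of which are routine given the compact support and the strict inequality $r \ge \ve > |p|$.
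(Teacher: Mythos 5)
Your proposal is correct and follows the same global strategy as the paper: translate so that the origin lies in $U$, expand $f$ and $\Rc f$ in spherical harmonics, use the Ludwig--Helgason/Funk--Hecke relation with the Gegenbauer kernel, and finish with a one-dimensional moment argument in which the parity of $n$ enters through $(n-3)/2$ being a half-integer. Where you diverge is in how the one-dimensional uniqueness is established. The paper first rewrites $C_l^{(n-2)/2}(s/r)\lb 1-s^2/r^2\rb^{(n-3)/2}$ via Rodrigues' formula and integrates by parts $l$ times, so that the kernel collapses to the single binomial series $\lb 1-s^2/r^2\rb^{l+(n-3)/2}$, whose Taylor coefficients $(-1)^m\binom{l+(n-3)/2}{m}$ are individually and manifestly nonzero; a separate density lemma (repeated application of $\frac{1}{s}\frac{\D}{\D s}$, evaluation at $s=0$, and Weierstrass approximation) then finishes. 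You instead Taylor-expand the product kernel directly, so the coefficient of $p^M$ is a finite sum $\sum_j c_j(-1)^{s_j}\binom{(n-3)/2}{s_j}$ over the monomials $c_j u^{k-2j}$ of the Gegenbauer polynomial, and its nonvanishing is not term-by-term obvious --- this is exactly the ``main obstacle'' you flag, and it is the one step you state only loosely. Your asymptotic argument does close it: since $(-1)^s\binom{(n-3)/2}{s}\sim s^{-(n-1)/2}/\Gamma(-(n-3)/2)$ with a fixed sign, and all the $s_j$ differ from $M/2$ by $O(1)$, the sum is asymptotic to a nonzero constant times $M^{-(n-1)/2}\sum_j c_j = M^{-(n-1)/2}\,C_k^{(n-2)/2}(1)$, which is nonzero; hence infinitely many moments $\int_\ve^R f_{kl}(r)\,r^{n-2-M}\,\D r$ vanish, which after the substitution $t=r^{-2}$ is all the Weierstrass moment argument needs. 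So both proofs work; the paper's Rodrigues'/integration-by-parts manoeuvre simply trades your asymptotic estimate for an exact, coefficient-by-coefficient one. Your closing observation about the odd-$n$ case (polynomial kernel, hence only finitely many moment conditions per harmonic degree) correctly identifies the mechanism behind the counterexample of Theorem~\ref{lucp-rad}.
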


\section{Preliminaries}\label{prelim}
This section is devoted to fixing our notation and giving relevant definitions. Let $\Rn$ denote the $n$-dimensional Euclidean space with the open unit ball denoted as $\Bb$ and let $\Sn$ denote the unit sphere. We let $C^\infty(\Rn)$ denote the space of smooth functions and $C_c^\infty(\Rn)$ denote its subspace of functions having compact support. We let $\Sc(\Rn)$ denote the space of Schwartz functions.

\subsection{The \textit{d-}plane transform} In this subsection, we give a brief introduction to the $d$-plane transform, mostly following the classical book by Helgason \cite[\tbl{p. 32-34}]{Helgason_Book}. Let $d$ be an integer such that $0 < d < n$, and let $G(d,n)$ denote the manifold of all $d$-planes in $\Rn$. The $d$-plane transform of a function $f$ is defined as
\begin{equation}
    \Rc_d f (\xi) = \int\limits_{\xi} f(x) \D m(x),
\end{equation}
where $\xi \in G(d,n)$ is a $d$-plane in $\Rn$, and $\D m$ denotes the natural $d$-dimensional measure on $\xi$. When $d=n-1$, it is called the Radon transform and simply denoted as $\Rc$; and in the case of $d=1$, it coincides with the so-called X-ray transform. The $d$-plane transform of a continuous function is well-defined under some mild decay conditions at infinity. We will restrict our attention to functions in $C_c^\infty(\Rn)$, in which case, $\Rc_d: C_c^\infty(\Rn) \to C_c^\infty(G(d,n))$. 

It is advantageous to study the formal dual transform of $\Rc_d$, given as
\begin{equation}\label{adjoint}
    \Rc_d^*: C^\infty(G(d,n)) \to C^\infty(\Rn), \quad
    \Rc_d^* \varphi (x) = \int\limits_{x \in \xi} \varphi(\xi) ~\D \mu(\xi),
\end{equation}
where $\mu$ is the unique measure on the compact space of $d$-planes passing through $x$, invariant under all rotations around $x$ and with total measure $1$. Another utility of studying the adjoint operator is to extend the definition of the transform to the space of distributions. We do not consider this extension here, and refer the interested reader to the article \cite{Covi_Railo_Mönkkönen}. The composition of these operators gives rise to the so-called normal operator, $\Nc_d \coloneqq \Rc_d^* \Rc_d : C_c^\infty(\Rn) \to C^\infty(\Rn)$, which has the following expression:
\begin{equation}\label{normal}
    \begin{split}
        \Nc_d f(x) &= \frac{|\Sb^{d-1}|}{|\Sb^{n-1}|} \int\limits_{\Rn} f(y) \frac{1}{|x-y|^{n-d}} ~\D y \\
        &= \frac{|\Sb^{d-1}|}{|\Sb^{n-1}|} \lb f \ast \frac{1}{|\cdot|^{n-d}} \rb (x), 
    \end{split}
\end{equation}
where $|\Sb^{d-1}|$ and $|\Sn|$ denote the surface areas of the unit spheres in $d$- and $n$-dimensions, respectively. Note that this expression can be used to define the normal operator for compactly supported distributions as well. 

\subsection{Unique continuation for the \textit{d}-plane transform}\label{defnucp} Let us now explain what we mean by \emph{unique continuation} of the $d$-plane transform. Let $U \subset \Rn$ be an open set and let $\Lc$ denote either the $d$-plane transform or its normal operator. We say that the operator $\Lc$ has unique continuation property (UCP) if there does not exist a non-zero function $f$ which vanishes in $U$ and $\Lc f$ vanishes in $U$. Thus, if $\Lc$ has the unique continuation property and if $f$ and $\Lc f$ both vanish in an open set, then $f$ must vanish identically. If $\Lc$ is the $d$-plane transform $\Rc_d$, then $\Rc_d f$ vanishing on an open set $U$ means that $\Rc_d f$ vanishes on all $d$-planes intersecting $U$. We sometimes refer to this property as the weak UCP, since we also study the strong UCP: $\Lc$ is said to have strong UCP if for any $f$ such that $f$ vanishes in an open set $U$ and $\Lc f$ vanishes to infinite order at a point in $U$, then $f$ vanishes identically.

\subsection{Spherical harmonics and Gegenbauer polynomials}\label{shgp} Spherical harmonics are special functions which arise in many theoretical and practical applications. We only provide a brief introduction, fix our notation and state some results used in the article. We refer the interested reader to \cite{Stein-Weiss} for a comprehensive treatment (see also \cite{Müller_book, Seeley}). Spherical harmonics are restrictions to the unit sphere of homogeneous harmonic polynomials. For a fixed $l \geq 0$, there are 
\[
N(n,l) = \frac{(2l+n-2)(n+l-3)!}{l! (n-2)!}, \quad N(n,0) = 1,
\]
linearly independent spherical harmonics of degree $l$ in $\Rn$, which we denote by $\{Y_{l,k}\}$. These form a complete set of orthonormal functions on the sphere (i.e., the closure of the span of the spherical harmonics is $L^2(\Sb^{n-1})$), and any function $f \in C_c^\infty(\Rn)$ can be written as 
\[
f(x)=\sum\limits_{l=0}^{\infty} \sum\limits_{k=0}^{N(n,l)} f_{l, k}(|x|)Y_{l, k}\lb \frac{x}{|x|} \rb,
\]
where 
\[
f_{l, k}(|x|)=f_{l,k}(r)=\int\limits_{\Sb^{n-1}} f(r\theta) \overline{Y}_{l,k}(\theta) \D \theta.
\]

A closely related family of special functions is \emph{Gegenbauer polynomials}, denoted as $C_l^\A$. For $\A > -1/2$, $C_l^\A$ is a polynomial of degree $l$, and the family of Gegenbauer polynomials is orthogonal on $[-1,1]$ with the weight $(1-x^2)^{\A-1/2}$. We assume that these are normalized so that $C_l^\A(1) = 1$.  We will be mainly concerned with the case $\A= \frac{n-2}{2}$. For $n=2$, these reduce to the Chebyshev polynomials. We will need the Rodrigues' formula for Gegenbauer polynomials:
\begin{equation}\label{geg}
    C_l^\A (x) = K \lb 1-x^2 \rb^{-\A + 1/2} \frac{\D^l}{\D x^l} \lb 1-x^2 \rb^{l+\A-1/2},
\end{equation}
where $K = K(l,\A)$ is a constant.

\section{Proofs of main results}\label{proof}
\subsection{Lack of UCP when the $d$-plane is even dimensional}\label{p-lucp} In this subsection,  we present the proofs of the results in the Subsection \ref{lucp}, concerning the lack of UCP. Before we proceed to the proofs, we make the following trivial observation. Using translation and scaling, we can assume $U$ to be the unit ball in $\Rn$, and $x_0$ can be taken to be the origin. This can be done by considering a large enough ball containing the set $U$, followed by translation and scaling, if required. In all the proofs below, we assume this simplification. 

We start by presenting a proof of Theorem~\ref{lucp-rad}.
\begin{proof}[Proof of Theorem~\ref{lucp-rad}]
    Let $n \geq 3$ be odd. Consider a non-zero function $h \in C^\infty(\R)$ with the following properties:
\begin{itemize}
	\item $h$ is even,
	\item $\mathrm{supp} (h) \subset (-a, -1) \cup (1,a)$ for some $ a > 1$.
\end{itemize}
Define $g(\theta, s) = h(s)$. By range conditions and support theorem for Radon transform \cite[p. 37]{Natterer_book}, there exists $f \in C_c^\infty(\Rn)$ such that $f$ is supported in the ball of radius $a$ centered at the origin, and $\Rc f = g$. Since $g$ is rotation invariant, $f$ is radial and the inversion formula for radial functions \cite[p. 26]{Natterer_book} gives for $|x| < 1$ and $s > 0$:
\begin{align*}
f(x) &= c(n) |x|^{2-n} \int\limits_{|x|}^\infty (s^2 - |x|^2)^{(n-3)/2} g^{(n-1)}(s) \mathrm{d}s \\
&= c(n) |x|^{2-n} \int\limits_{1}^{a} (s^2 - |x|^2)^{(n-3)/2} h^{(n-1)}(s) \mathrm{d}s.
\end{align*} 
Since $n$ is odd, $\lb s^2 - |x|^2 \rb^{(n-3)/2}$ is a polynomial in $s$, of degree at most $n-3$. Using integration by parts and by the choice of $h$, it is immediate that $f(x) = 0$ for $|x| < 1$, but $f$ does not vanish identically. 
\end{proof}

\noindent We have shown that there exists a compactly supported smooth radial function in $\Rn$ which vanishes in the unit ball, and whose Radon transform vanishes on all hyperplanes intersecting the unit ball. This is a crucial ingredient in the proof for Theorem~\ref{lucp-rd}.
\begin{proof}[Proof of Theorem~\ref{lucp-rd}]
    Let $0 <d < n$ be even. If $d=n-1$, this becomes Theorem~\ref{lucp-rad}, and has been proved above. Let us assume $ 0 < d \leq n-2$. By Theorem~\ref{lucp-rad}, there exists a non-trivial radial function $f_d: \R^{d+1} \to \R$ satisfying the following: 
    \begin{itemize}
        \item $f_d \in C_c^\infty(\R^{d+1})$,
        \item $f_d$ does not vanish identically,
        \item $f_d$ vanishes in the unit ball of $\R^{d+1}$,
        \item $\Rc_d f_d (\xi_d) = 0$, for any $d$-plane $\xi_d$ in $\R^{d+1}$ which intersects the unit ball of $\R^{d+1}$.
    \end{itemize}
    Let us define the function $f: \Rn \to \R$, by extending the above function radially, i.e.,
    \[
    f(x_1, \dots x_n) \coloneqq f_d ( |x|, \underbrace{ 0, \dots, 0}_{\text{$d$-zeros}} ).
    \]
    From the corresponding properties of $f_d$, it is easy to see that the function $f$ has the following properties:
    \begin{itemize}
        \item $f \in C_c^\infty(\Rn)$,
        \item $f$ does not vanish identically,
        \item $f$ vanishes in the unit ball of $\Rn$.
    \end{itemize}
    
    All that is left to show is that $\Rc_d f (\xi) = 0$, for all $d$-planes intersecting the unit ball in $\Rn$.
    Let $\xi$ be such a $d$-plane in $\Rn$. Then there exist orthonormal vectors $v_0, \dots, v_d$ in $\Rn$ such that $\xi$ can be written as
    \[
    \xi = \left\{a v_0 + \sum\limits_{i=1}^d c_i v_i~ \Big\lvert~ c_i \in \R \right\}
    \]
    It then follows that $a = \mathrm{dist}(0,\xi) < 1$. Since $v_0, \dots, v_d$ are orthonormal, they are linearly independent as well. Extend $v_0, \dots, v_d$ to an orthonormal basis for $\Rn$, say $\{v_0, \dots, v_d, u_1, \dots , u_{n-d-1}\}$. The matrix $A$ defined as
    \[
    A = \begin{bmatrix}
        v_0 & v_1 & \dots & v_d & u_1 & \dots & u_{n-d-1}
    \end{bmatrix}^T
    \]
    belongs to  $O(n)$, the group of orthogonal matrices, and satisfies  
    \[
    A v_j =  e_j, \quad \text{for all} ~~ 0 \leq j \leq d.
    \]
    Then the image of $\xi$ under $A$ lies in $\R^{d+1}$, canonically embedded in $\Rn$. We have
    \begin{align*}
        \Rc_d f (\xi) &= \int\limits_\xi f(x)~ \D x \\
        &= \int\limits_{A\xi} f\lb A^T y \rb \D y \\
        &= \int\limits_{A\xi} f_d(|y|, 0, \dots, 0) ~\D y \\
        &= 0,
    \end{align*}
    since $A\xi$ is a $d$-plane in $\R^{d+1}$ intersecting the unit ball of $\R^{d+1}$. This finishes the proof.
\end{proof}
Notice that if $\Rc_d f = 0$ for all $d$-planes which intersect the unit ball, then the normal operator also vanishes in the unit ball. Hence, Corollary~\ref{lucp-nd} follows from Theorem~\ref{lucp-rd}. However, we could not resist presenting the proof below, due to its remarkable simplicity, and the crucial use of the locality of integer powers of Laplacian. 

\begin{proof}[Proof of Corollary~\ref{lucp-nd}]
Consider a non-trivial function $u \in C_c^\infty(\Rn)$ such that $u\lvert_\Bb = 0$ and define $f = (-\Delta)^{d/2} u$. Then $f$ vanishes in $\Bb$ and $f \in C_c^\infty(\Rn)$. Since $u$ is non-trivial and has compact support, $u$ is not polyharmonic and hence $f$ does not vanish identically. By taking Fourier transform (viewing $\frac{1}{|x|^{n-d}} \in L^1_{\mathrm{loc}}(\Rn)$ as a tempered distribution) in the equation \eqref{normal} and using the expression for Fourier transform of Riesz potential \cite[Eq 53, p. 241]{Helgason_Book}, we have (up to dimensional constants)
\begin{align*}
    \widehat{\Nc_d f}(\xi) &= |\xi|^{-d} \widehat{f}(\xi) \\
    &= \widehat{u}(\xi),
\end{align*}
which implies that $\Nc_d f  = u$ (up to constants). Hence $f$ is the required function.
\end{proof}
\subsection{UCP when the $d$-plane is odd dimensional}\label{p-ucp}
We now provide the proofs of the results in Subsection \ref{ucp}. These results show that UCP holds when the surface of integration is odd dimensional. 
We assume that $0 < d < n$ and that $d$ is odd. As before, we also assume that $U = \Bb$, the unit ball centered at the origin in $\Rn$, and $x_0 = 0$.

We start with a density lemma, which forms a crucial ingredient in the proof. 
\begin{lemma}
    Let $0 < d < n$ be odd. For each $m \geq 0$ and $1 \leq i_1, \dots, i_m \leq n$, the term
    \[
        \frac{x_{i_1} x_{i_2} \dots x_{i_m}}{|x|^{2m+2k+n-d}}
    \]
    can be written as a finite linear combination of derivatives of $\frac{1}{|x|^{n-d}}$, for all $k \geq 0$.
\end{lemma}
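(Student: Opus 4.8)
The plan is to prove the statement by strong induction on the degree $m$ of the monomial, for all $k \geq 0$ simultaneously, relying on just two elementary identities. Writing $|x|^{-s}$ as a radial function, one has
\[
\Delta |x|^{-s} = s(s + 2 - n)\,|x|^{-s-2}, \qquad \partial_i |x|^{-s} = -s\, x_i\, |x|^{-s-2},
\]
and I would build all the required expressions out of these two.

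For the \emph{base case} $m = 0$ I would iterate the first identity $k$ times to obtain
\[
\Delta^k |x|^{-(n-d)} = c_k\, |x|^{-(2k + n - d)}, \qquad c_k = \prod_{j=0}^{k-1}\big(n - d + 2j\big)\big(2(j+1) - d\big).
\]
Since $0 < d < n$, every factor $n - d + 2j$ is positive, and since $d$ is odd, every factor $2(j+1) - d$ is an odd integer and hence nonzero; therefore $c_k \neq 0$ and $|x|^{-(2k + n - d)} = c_k^{-1}\,\Delta^k |x|^{-(n-d)}$ is a linear combination of partial derivatives of $|x|^{-(n-d)}$, because $\Delta^k$ is. I expect this to be the crux of the argument: it is exactly here that oddness of $d$ is indispensable, since for even $d$ the factor $2(j+1) - d$ vanishes at $j = d/2 - 1$ (equivalently $|x|^{-(n-2)}$ is harmonic), so the chain of Laplacians collapses and the base case fails.

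For the \emph{inductive step}, assuming the claim for all degrees strictly below $m$ and all $k \geq 0$, I would set $P = x_{i_1}\cdots x_{i_{m-1}}$ and $\beta = 2(m-1) + 2k + n - d$, and apply the product rule in the form
\[
\partial_{i_m}\big(P\,|x|^{-\beta}\big) = (\partial_{i_m}P)\,|x|^{-\beta} - \beta\, \frac{x_{i_1}\cdots x_{i_m}}{|x|^{2m + 2k + n - d}}.
\]
Solving for the target term and noting $\beta > 0$ (automatic since $n > d$, $m \geq 1$, $k \geq 0$), I get
\[
\frac{x_{i_1}\cdots x_{i_m}}{|x|^{2m + 2k + n - d}} = \frac{1}{\beta}\Big[(\partial_{i_m}P)\,|x|^{-\beta} - \partial_{i_m}\big(P\,|x|^{-\beta}\big)\Big].
\]
The induction hypothesis in degree $m-1$ (with the same $k$) expresses $P\,|x|^{-\beta}$ as a finite linear combination of derivatives of $|x|^{-(n-d)}$, hence so is its derivative $\partial_{i_m}\big(P\,|x|^{-\beta}\big)$. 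The leftover term $(\partial_{i_m}P)\,|x|^{-\beta}$ is a constant multiple of a monomial of degree $m-2$ divided by $|x|^{2(m-2) + 2(k+1) + n - d}$, which the induction hypothesis in degree $m-2$ with parameter $k+1$ again expresses in the desired form; adding the two pieces closes the induction. Apart from the base case, the only thing that must be checked is the positivity of $\beta$, which holds throughout, so I anticipate no further obstacle.
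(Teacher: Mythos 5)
Your proof is correct and follows essentially the same route as the paper's: the base case is handled by iterating the Laplacian and using that $d$ odd makes every factor $2(j+1)-d$ nonzero, and the inductive step differentiates the lower-degree term, applies the product rule, and solves for the target using that the coefficient $2(m-1)+2k+n-d$ is positive. Your write-up is slightly more explicit than the paper's about needing strong induction (the leftover term $(\partial_{i_m}P)\,|x|^{-\beta}$ drops the degree by two while raising $k$ by one), but the argument is the same.
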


\begin{proof}
    We prove the statement by induction on $m$. We have 
    \[
        \Delta \lb \frac{1}{|x|^{n-d}}\rb = -(n-d)(d-2) \frac{1}{|x|^{n-d+2}}, 
    \]
    where each of the coefficients is non-zero for $d$ odd. Taking higher powers of $\Delta$, the result is proved for $m=0$. 
    
    Now let us assume the statement is true for some $m$, and consider
    \begin{align*}
        \PD_{i_{m+1}} \lb \frac{x_{i_1} \dots x_{i_m}}{|x|^{2m+2k+n-d}} \rb &= -(2m+2k+n-d) \frac{x_{i_1} \dots x_{i_{m+1}}}{|x|^{2(m+1)+2k+n-d}} \\
        & \qquad \qquad \qquad + \sum\limits_{j=1}^m \d_{i_{m+1} \, i_j} \, \frac{x_{i_1} \dots \wh{x_{i_j}} \dots x_{i_m}}{|x|^{2m+2k+n-d}},
    \end{align*}
where the $\,\wh{}\,$ denotes the missing term. The term on the left and each term in the sum on the right can be written as a finite linear combination of derivatives of $\frac{1}{|x|^{n-d}}$, by induction hypothesis. Since $d < n$, $2m+2k+n-d$ is never zero, and the claim follows.
\end{proof}

We will only need the following corollary, obtained by taking $k=0$ above.
\begin{corollary}\label{poly}
    Let $n \geq 2$ and $0 < d < n$ be an odd integer. For any polynomial $p$, $p\lb \frac{x}{|x|^2}\rb \cdot \frac{1}{|x|^{n-d}}$ can be written as a finite linear combination of derivatives of $\frac{1}{|x|^{n-d}}$. 
\end{corollary}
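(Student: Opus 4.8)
The plan is to reduce the statement directly to the Lemma by taking $k=0$ and expanding $p$ into monomials; the corollary is really just the monomial-by-monomial specialization of the Lemma, summed up.

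First I would write the polynomial $p$ in $n$ variables as a finite sum of monomials, $p(y) = \sum_{\alpha} c_\alpha y^\alpha$, where the sum ranges over finitely many multi-indices $\alpha = (\alpha_1, \dots, \alpha_n)$ and $y^\alpha = y_1^{\alpha_1} \cdots y_n^{\alpha_n}$. Substituting $y = x/|x|^2$ gives $\lb x/|x|^2 \rb^\alpha = x^\alpha / |x|^{2|\alpha|}$, where $|\alpha| = \alpha_1 + \cdots + \alpha_n$. Multiplying through by $1/|x|^{n-d}$ therefore yields
\[
p\lb \F{x}{|x|^2} \rb \cdot \F{1}{|x|^{n-d}} = \sum_{\alpha} c_\alpha \, \F{x^\alpha}{|x|^{2|\alpha| + n - d}},
\]
a finite sum.

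Next I would match each summand with the Lemma. Fixing a multi-index $\alpha$ and setting $m := |\alpha|$, I would write the monomial $x^\alpha$ as an ordered product $x_{i_1} x_{i_2} \cdots x_{i_m}$, where each index $j \in \{1, \dots, n\}$ is repeated $\alpha_j$ times. Then the summand $x^\alpha / |x|^{2m + n - d}$ is exactly the term appearing in the Lemma for this choice of indices and with $k = 0$. Hence, by the Lemma, each summand is a finite linear combination of derivatives of $1/|x|^{n-d}$. Finally, a finite sum of finite linear combinations of derivatives of $1/|x|^{n-d}$ is again such a combination, the scalars $c_\alpha$ merely rescaling the coefficients produced by the Lemma; this gives the claim.

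Since the corollary is an immediate specialization, I expect no genuine obstacle beyond the routine bookkeeping of expressing $x^\alpha$ as a product of coordinate functions. The only substantive point—that the relevant coefficients, namely $(n-d)(d-2)$ in the base case and $2m + 2k + n - d$ in the inductive step, are nonzero precisely because $d$ is odd and $0 < d < n$—has already been secured inside the proof of the Lemma, so here I would simply invoke it with $k=0$.
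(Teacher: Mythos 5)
Your proof is correct and matches the paper's approach exactly: the paper also obtains the corollary by taking $k=0$ in the preceding lemma, and your monomial expansion merely spells out the routine bookkeeping the paper leaves implicit. No issues.
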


The above lemma refines \cite[Lemma~2.2]{Keijo_partial_function}. The restriction on the order of Riesz potentials in the results of \cite{Keijo_partial_function} arose from the limitation in the lemma, which is removed here.

\begin{proof}[Proof of Theorem~\ref{ucp-nd-inf}]
    Let us denote the constant in the expression \eqref{normal} by $C$. We have
    \begin{align*}
        \Nc_d f (x) &= C \lb f \ast \frac{1}{|x|^{n-d}} \rb (x).
    \end{align*}
    Let $p$ be a polynomial, and $\Dc$ denote the differential operator such that 
    \[
    \Dc \lb \frac{1}{|x|^{n-d}} \rb = p \lb \frac{x}{|x|^2}\rb \cdot \frac{1}{|x|^{n-d}}.
    \]
    Such a differential operator (of finite order) exists by Corollary~\ref{poly}. Therefore, we get
    \begin{align*}
        \Dc \Nc_d f (x) &= C~ \Dc \lb f \ast \frac{1}{|x|^{n-d}} \rb (x) \\
        &= C \lb f \ast  p \lb \frac{x}{|x|^2}\rb \cdot \frac{1}{|x|^{n-d}} \rb (x).
    \end{align*}
    The above expression is well-defined for $|x|$ small enough, since $f$ vanishes in the unit ball. Let $R > 1$ be such that $\mathrm{supp}(f) \subset B(0,R)$. Since $\Nc_d f$ vanishes to infinite order at the origin, we get
    \begin{align*}
        0 &= \int\limits_{B(0,R)} f(-y)  p \lb \frac{y}{|y|^2}\rb \cdot \frac{1}{|y|^{n-d}} \,\D y \\
        &= \int\limits_{A(0,1,R)} f(-y)  p \lb \frac{y}{|y|^2}\rb \cdot \frac{1}{|y|^{n-d}} \,\D y ,
    \end{align*}
    since $f$ vanishes in the unit ball. Here, $A(0,r_1,r_2)$ denotes the annulus centered at origin, with inner and outer radii $r_1$ and $r_2$, respectively. Considering the Kelvin transform $x = \frac{y}{|y|^2}$, we have
    \begin{align*}
        0 &= \int\limits_{A(0, 1/R,1)} f \lb -\frac{x}{|x|^2} \rb p(x) |x|^{n-d} \frac{1}{|x|^{2n}} \,\D x \\
        &= \int\limits_{A(0,1/R,1)} \Tilde{f}(x) p(x) \,\D x,
    \end{align*}
    where $\Tilde{f}(x) = f \lb -\frac{x}{|x|^2} \rb  |x|^{-n-d}$, and hence $\mathrm{supp}(\Tilde{f}) \subset A(0,1/R,1)$, and $\Tilde{f}$ is smooth. Since the polynomial $p$ is arbitrary, we conclude that $\Tilde{f}$, and hence $f$ vanishes identically.
\end{proof}

Before we proceed to the proof of Theorem~\ref{ucp-rad}, we need the following density result.

\begin{lemma}\label{density}
    Let $0 < a < b < \infty$ and  $f \in C_c^\infty((a,b))$ be such that for some $k \in \Nb \cup \{0\}$,
    \[
    \int\limits_a^b f(r) \lb 1- \frac{s^2}{r^2} \rb^{\frac{2k-1}{2}} \D r =0
    \]
    for all $s \in [0,a)$. Then $f \equiv 0$.
\end{lemma}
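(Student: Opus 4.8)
The plan is to prove Lemma~\ref{density} by reducing the integral condition to a statement about the vanishing of a function against a complete family, specifically against all powers of a suitable variable. First I would make the substitution $t = 1/r^2$, which maps the interval $r \in (a,b)$ to $t \in (1/b^2, 1/a^2)$, a bounded interval away from the origin. Under this change of variables, the factor $\left(1 - s^2/r^2\right)^{(2k-1)/2}$ becomes $(1 - s^2 t)^{(2k-1)/2}$, and the measure $\D r$ picks up a Jacobian factor. Writing $F(t)$ for the transformed function (which is still smooth and compactly supported in the open interval $(1/b^2, 1/a^2)$), the hypothesis becomes
\[
\int F(t)\, (1 - s^2 t)^{(2k-1)/2}\, \D t = 0 \quad \text{for all } s \in [0,a).
\]
Setting $\lambda = s^2$, which ranges over $[0, a^2)$, I would regard the left-hand side as a function of $\lambda$ and aim to show it forces $F \equiv 0$.

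The key idea is that $\lambda \mapsto (1 - \lambda t)^{(2k-1)/2}$, for $t$ confined to a compact interval and $\lambda$ small, is real-analytic in $\lambda$ (since $1 - \lambda t$ stays bounded away from $0$ for $\lambda < a^2 \le 1/t$ on the relevant range), and its Taylor coefficients in $\lambda$ are constant multiples of $t^j$ via the binomial/generalized binomial expansion
\[
(1 - \lambda t)^{(2k-1)/2} = \sum_{j=0}^{\infty} \binom{(2k-1)/2}{j} (-1)^j \lambda^j t^j.
\]
Since the integral vanishes identically for $\lambda$ in an interval, and the integrand is dominated uniformly so that differentiation under the integral sign is justified, I can differentiate repeatedly in $\lambda$ at $\lambda = 0$ to conclude that $\int F(t)\, t^j\, \D t = 0$ for every $j \ge 0$ (the binomial coefficients $\binom{(2k-1)/2}{j}$ are all nonzero because $(2k-1)/2$ is a half-integer, never a nonnegative integer, so the series never terminates and no coefficient degenerates). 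Thus all moments of $F$ vanish.

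From vanishing of all moments of a smooth compactly supported function $F$ on a bounded interval, I would conclude $F \equiv 0$ by a standard Weierstrass density argument: polynomials are dense in $C^0$ of the compact support of $F$ in the uniform norm, so $\int F \cdot p = 0$ for all polynomials $p$ forces $\int F \cdot \psi = 0$ for all continuous $\psi$, hence $\int F^2 = 0$ and $F \equiv 0$. Undoing the substitution gives $f \equiv 0$, completing the proof.

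The main obstacle I anticipate is the justification of the interchange of differentiation (or series summation) and integration, and confirming the analyticity/convergence holds uniformly on the relevant ranges of $s$ and $r$. The crucial point is that $s$ only ranges over $[0,a)$ while $r \ge a$, so $s^2/r^2 < 1$ stays bounded away from $1$ on the support of $f$; this keeps the kernel $(1 - s^2/r^2)^{(2k-1)/2}$ smooth and its $\lambda$-expansion convergent with uniform control, licensing the moment extraction. One should also be mindful that the exponent $(2k-1)/2$ can be negative (when $k=0$), but since $1 - s^2/r^2$ is bounded below by a positive constant on the support, the negative power causes no singularity and the argument goes through unchanged.
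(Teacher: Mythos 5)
Your proposal is correct and takes essentially the same route as the paper: both arguments extract all moments of the (transformed) function in the variable $1/r^2$ by differentiating the integral identity at $s=0$ and then conclude via Weierstrass density. The only difference is in packaging --- you use the generalized binomial expansion in $\lambda = s^2$ to treat all $k$ at once, whereas the paper first reduces to $k=0$ and then iterates the operator $\frac{1}{s}\frac{\D}{\D s}$; in both cases the nonvanishing of the relevant coefficients rests on the same fact, namely that the exponent $\frac{2k-1}{2}$ is a half-integer and never a nonnegative integer.
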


\begin{proof}
    Since $f \equiv 0$ if and only if $\frac{f(r)}{r^{2k}}$ vanishes identically, it is enough to consider $k=0$. This can be seen by repeated application of the operator $D = \frac{1}{s} \frac{\D}{\D s}$. In this case, the identity can be written as
    \begin{align*}
        0 &= \int\limits_a^b \frac{r f(r)}{\sqrt{r^2 - s^2}} \, \D r.
    \end{align*}
    By repeated application of the operator $D$, we find that for any polynomial $p$,
    \begin{align*}
        0 &= \int\limits_a^b \frac{r f(r)}{\sqrt{r^2 - s^2}} p \lb \frac{1}{r^2-s^2} \rb\, \D r.
    \end{align*}
    Evaluating the expression at $s=0$, we get
    \begin{align*}
        0 &= \int\limits_a^b f(r) p \lb \frac{1}{r^2} \rb \D r.
    \end{align*}
    By the transformation $ u = \frac{1}{r^2}$, we arrive at 
    \begin{align*}
        \int\limits_{b^{-2}}^{a^{-2}} \Tilde{f}(u) p(u) \D u & = 0,
    \end{align*}
    for any polynomial $p$, where $\Tilde{f}(u) \coloneqq u^{-3/2} f(u^{-1/2})$. We conclude that $\Tilde{f}$ and hence $f$ vanishes identically.
\end{proof}

\begin{proof}[Proof of Theorem~\ref{ucp-rad}]
    As before, we assume $U = \Bb$ and $f\in C_c^{\infty}(B(1,R))$, for some $R > 1$. Let us denote $\Rc f$ by $g$. Then $f$ and $g$ can both be expanded into spherical harmonics:
    \beq
        f(r,\theta)=\sum\limits_{l=0}^{\infty} \sum\limits_{k=0}^{N(n,l)} f_{l, k}(r)Y_{l, k}(\theta),
    \eeq
    \beq
        g(s,\theta)=\sum\limits_{l=0}^{\infty} \sum\limits_{k=0}^{N(n,l)} g_{l, k}(s)Y_{l, k}(\theta),
    \eeq
where $N(n,l)$ denotes the number of linearly independent spherical harmonics of degree $l$ in $\Rn$. It is known that the coefficients in the spherical harmonics expansion of $f$ and $g$ satisfy \cite{Natterer_book}
\[
g_{l,k}(s) = |\Sb^{n-2}| \int\limits_{s}^\infty r^{n-2} f_{l,k}(r) C_l^\A \lb \frac{s}{r} \rb \lb 1 - \frac{s^2}{r^2} \rb^{(n-3)/2} \D r,
\]
where $C_l^\A$ denote the Gegenbauer polynomials, defined in Subsection~\ref{shgp}, with $\A = (n-2)/2$.

Since $f$ vanishes in $\Bb$ and outside $B(0,R)$, we must have $f_{l,k}(r) = 0$ for $r \leq 1$ and $r\geq R$. Similarly, since $g$ vanishes for $s < 1$, we find that for $s < 1$
\begin{align*}
    0&=g_{l, k}(s)\\
    &=|\Sb^{n-2}|\int\limits_{1}^{R} r^{n-2} f_{l,k}(r) C_l^\A \lb \frac{s}{r} \rb \lb 1 - \frac{s^2}{r^2} \rb^{(n-3)/2} \D r.
\end{align*}
Recall that the Gegenbauer polynomials have the expression \eqref{geg}:
\begin{align*}
    C_l^\A (x) &= K (1-x^2) \frac{\D^l}{\D x^l} (1-x^2)^{l+\A-\frac{1}{2}}.
\end{align*}
Using chain rule, we obtain
\begin{align*}
    C_l^\A \lb \frac{s}{r} \rb &= K \lb 1-\frac{s^2}{r^2} \rb^{-\A+\frac{1}{2}} \lb \frac{-r^2}{s} \frac{\D}{\D r} \rb^l \lb 1-\frac{s^2}{r^2} \rb^{l+\A-\frac{1}{2}}.
\end{align*}
Thus, we obtain for all $s < 1$:
\begin{align*}
    0 &= \int\limits_{1}^R r^{n-2} f_{l,k}(r) \lb \frac{-r^2}{s} \frac{\D}{\D r} \rb^l \lb 1-\frac{s^2}{r^2} \rb^{l+\A-\frac{1}{2}} \, \D r.
\end{align*}
Notice that the terms $\frac{1}{s^l}$ can be taken out of the integral. Integrating by parts $l-$times, we find
\begin{align*}
    0 &= \int\limits_1^R \frac{1}{r^2} \lb r^2 \frac{\D}{\D r} \rb^l \lb r^n f_{l,k}(r) \rb \lb 1-\frac{s^2}{r^2} \rb^{l+\A-\frac{1}{2}} \, \D r.
\end{align*}
Invoking Lemma~\ref{density}, we obtain $\frac{1}{r^2} \lb r^2 \frac{\D}{\D r} \rb^l \lb r^n f_{l,k}(r) \rb$ vanishes identically, and hence so does $f_{l,k}$, since it is compactly supported. Hence, $f$ vanishes identically.
\end{proof}
\section{Acknowledgments}
The authors thank Venky Krishnan and Gaik Ambartsoumian for various helpful discussions. The authors are also grateful to the anonymous referees for helpful comments which led to the improvement in the presentation of the article.

\bibliographystyle{amsplain}
\bibliography{reference}

\end{document}